\documentclass{amsart}%
\usepackage{amsmath}
\usepackage{amsfonts}
\usepackage{amssymb}
\usepackage{tikz-cd}
\usepackage{graphicx}%
\usepackage{color}
\setcounter{MaxMatrixCols}{30}
\providecommand{\U}[1]{\protect\rule{.1in}{.1in}}
\newtheorem{theorem}{Theorem}[section]
\newtheorem{lemma}[theorem]{Lemma}
\newtheorem{corollary}[theorem]{Corollary}
\newtheorem{proposition}[theorem]{Proposition}
\newtheorem*{theorem*}{Theorem}
\newtheorem*{corollary*}{Corollary}

\theoremstyle{definition}
\newtheorem{definition}[theorem]{Definition}
\newtheorem{example}[theorem]{Example}

\newtheorem{question}[theorem]{Question}

\theoremstyle{remark}
\newtheorem{remark}[theorem]{Remark}

\numberwithin{equation}{section}
\newcommand\numberthis{\addtocounter{equation}{1}\tag{\theequation}}

\begin{document}

\title{Free products with amalgamation over central $\mathrm{C}^*$-subalgebras}
\author{Kristin Courtney}
\address{Mathematical Institute, WWU M\"{u}nster, Einsteinstr. 62, M\"{u}nster}
\email{kcourtne@uni-muenster.de}
\thanks{The research of the first-named author was supported by the Deutsche Forschungsgemeinschaft (SFB 878 Groups, Geometry \& Actions).}

\author{Tatiana Shulman}
\address{Department of Mathematical Physics and Differential Geometry, Institute of Mathematics of Polish Academy of Sciences, Warsaw}
\email{tshulman@impan.pl}
\thanks{The research of the second-named author was supported by the Polish National Science Centre grant
under the contract number DEC- 2012/06/A/ST1/00256, by the grant H2020-MSCA-RISE-2015-691246-QUANTUM DYNAMICS and Polish Government grant 3542/H2020/2016/2, and from the Eric Nordgren
Research Fellowship Fund at the University of New Hampshire.}




\subjclass[2010]{Primary 46L05; Secondary 47A67}

\date{}


\commby{Adrian Ioana}

\begin{abstract} Let $A$ and $B$ be $\mathrm{C}^*$-algebras whose quotients are all RFD, 
and let $C$ be a central $\mathrm{C}^*$-subalgebra in both $A$ and $B$. 
We prove that the full amalgamated free product
$A*_C B$ is then RFD. This generalizes Korchagin's result 
that amalgamated free products of commutative $\mathrm{C}^*$-algebras are RFD.
When applied to the case of trivial amalgam, our methods recover the result of Exel and Loring 
for separable $\mathrm{C}^*$-algebras.
As corollaries to our theorem, we give sufficient conditions for amalgamated
free products of maximally almost periodic (MAP) groups to have RFD $\mathrm{C}^*$-algebras and hence to be MAP.
\end{abstract}

\maketitle

\section{Introduction}

We say a $\mathrm{C}^*$-algebra $A$ is {\it residually finite dimensional} (RFD) if it has a separating family of finite dimensional representations. The direct sum of such a family yields a faithful embedding into a direct product of matrix algebras $\prod_{i\in \mathcal{I}} \mathbb{M}_{k_i}$, and so RFD $\mathrm{C}^*$-algebras can be thought of as those which are ``block diagonalizable".
  In addition to this, various other characterizations of the property 
have been obtained over the years (notably \cite{Arc95}, \cite{EL}, \cite{Had14}, \cite{CS}), and numerous classes of $\mathrm{C}^*$-algebras have been shown to be RFD.

Residual finite dimensionality and its permanence properties can be found at the heart of some of the most important questions in operator algebras. 
Perhaps most famously, Kirchberg proved in \cite{Kir93} that Connes' Embedding Problem (\cite{Con76}) is equivalent to the question of whether or not 
$\mathrm{C}^*(\mathbb{F}_2\times \mathbb{F}_2)$ is RFD.
Outside of $\mathbb{F}_2\times \mathbb{F}_2$, new examples of groups whose full group $\mathrm{C}^*$-algebra is RFD have become particularly welcome due to their relevance to problems of finding decidability algorithms for groups (see \cite{FNT}).


In the interest of finding more examples (and non-examples) of RFD $\mathrm{C}^*$-algebras, $\mathrm{C}^*$-algebraists have explored various permanence properties of residual finite dimensionality. In particular, when is it preserved under amalgamated free products?
Given two $\mathrm{C}^*$-algebras $A$ and $B$, each containing a copy of another $\mathrm{C}^*$-algebra $C$, their {\it amalgamated free product} $A*_CB$ is the unique $\mathrm{C}^*$-algebra such that there exist maps $\iota_A:A\to A*_CB$ and $\iota_B:B\to A*_CB$ whose images generate $A*_CB$ and whose restrictions to $C$ agree and such that $A*_C B$ is universal in this regard, meaning any pair of maps $\psi_A:A\to D$ and $\psi_B:B\to D$ into another $\mathrm{C}^*$-algebra $D$ that agree on $C$ must factor through $\iota_A$ and $\iota_B$ respectively. We call $C$ the {\it amalgam}.

The question of when the amalgamated product of two RFD $\mathrm{C}^*$-algebras is again RFD is quite difficult in full generality. For instance, Connes' Embedding Problem can be reformulated as a question of whether a certain amalgamated free product of full group $\mathrm{C}^*$-algebras is RFD. Indeed, for any discrete groups $\Lambda \leq G_1,G_2$, we have that $\mathrm{C}^*(G_1*_\Lambda  G_2)\simeq \mathrm{C}^*(G_1)*_{\mathrm{C}^*(\Lambda )}\mathrm{C}^*(G_2)$ (see \cite[Lemma 3.1]{ESS} for an argument). So we can write
$$\mathrm{C}^*(\mathbb{F}_2\times \mathbb{F}_2)\simeq \mathrm{C}^*(\mathbb{F}_2\times \mathbb{Z})\ast_{\mathrm{C}^*(\mathbb{F}_2)}\mathrm{C}^*(\mathbb{F}_2\times \mathbb{Z}),$$
where
it follows from \cite{Cho80} and the fact that residual finite dimensionality is preserved by minimal tensor products that $\mathrm{C}^*(\mathbb{F}_2\times \mathbb{Z})\simeq \mathrm{C}^*(\mathbb{F}_2)\otimes \mathrm{C}^*(\mathbb{Z})$ is RFD. 

In the case where the amalgam is trivial, i.e., when the amalgamated product is the full free product $A\ast B$ (or unital full free product $A\ast_{\mathbb{C}}B$), Exel and Loring showed in \cite{EL} that the amalgamated product of RFD $\mathrm{C}^*$-algebras is RFD. For non-trivial amalgams, such a nice result is too much to ask. In fact this can fail even in the case of amalgamated products of matrix algebras, as \cite[Example 2.4]{BD} shows. 
Nonetheless, necessary and sufficient conditions have been given for when amalgamated products of two separable RFD $\mathrm{C}^*$-algebras over finite dimensional amalgams are RFD, first for matrix algebras by Brown and Dykema in \cite{BD}, then for finite dimensional $\mathrm{C}^*$-algebras by Armstrong, Dykema, Exel, and Li in \cite{ADEL}, and finally for all separable RFD $\mathrm{C}^*$-algebras by Li and Shen in \cite{LiShen}.
Moving beyond finite dimensional amalgams, group theoretic results and restictions, which we outlilne below, indicate that the next natural class to study is amalgamated products of RFD $\mathrm{C}^*$-algebras over central amalgams.
In \cite{Kor14}, Korchagin proved that any amalgamated product of two commutative $\mathrm{C}^*$-algebras is RFD. This was the first and, until now, the only positive result on amalgamated products of RFD $\mathrm{C}^*$-algebras over infinite dimensional $\mathrm{C}^*$-algebras.

In this paper we substantially generalize Korchagin's statement. Let us say that a $\mathrm{C}^*$-algebra is {\it strongly RFD} if all its quotients are RFD. Here we prove


\begin{theorem*}
Let $A$ and $B$ be separable strongly RFD $\mathrm{C}^*$-algebras and let $C$ be a central $\mathrm{C}^*$-subalgebra in both $A$ and $B$.
Then the amalgamated free product $A\ast_C B$ is RFD.
\end{theorem*}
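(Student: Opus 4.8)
The plan is to use the centrality of $C$ to decompose $D := A \ast_C B$ over the spectrum $X = \widehat{C}$ of the (necessarily commutative) amalgam, reducing the problem to free products with \emph{trivial} amalgam, where the theorem of Exel and Loring applies. Since $C$ is central in both $A$ and $B$ and their images generate $D$, the copy of $C$ is central in $D$; thus $D$ is a $\mathrm{C}^*$-algebra over $X$ (a $C_0(X)$-algebra, once one accounts for a possible point at infinity, see below). For each character $x \in X$ I would form the fiber $D_x = D / \overline{C_x D}$, where $C_x = \{c \in C : x(c) = 0\}$; because $C$ is central, $\overline{C_x D}$ is a closed two-sided ideal, and each representation sending $C$ to the scalars $x(\cdot)$ factors through $D_x$.

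First I would identify the fibers by a base-change argument using only universal properties. Writing $A_x = A/\overline{C_x A}$ and $B_x = B/\overline{C_x B}$, the compositions $A \to D \to D_x$ and $B \to D \to D_x$ kill $\overline{C_x A}$ and $\overline{C_x B}$ and agree on $C/C_x \cong \mathbb{C}$, so they induce a $*$-homomorphism $A_x \ast_{\mathbb{C}} B_x \to D_x$; conversely the maps $A \to A_x \hookrightarrow A_x \ast_{\mathbb{C}} B_x$ and $B \to B_x \hookrightarrow A_x \ast_{\mathbb{C}} B_x$ agree on $C$ and so induce $D \to A_x \ast_{\mathbb{C}} B_x$ factoring through $D_x$. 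Checking these on generators shows $D_x \cong A_x \ast_{\mathbb{C}} B_x$, the unital free product over $\mathbb{C}$ (for the degenerate character, where all of $C$ acts as $0$, one instead gets the full free product $A_x \ast B_x$ with trivial amalgam). Now $A_x$ and $B_x$ are quotients of $A$ and $B$; since $A$ and $B$ are \emph{strongly} RFD, these quotients are separable and RFD, and therefore $D_x$ is RFD by the theorem of Exel and Loring. This is the step where the hypothesis of strong (rather than plain) RFD is indispensable, as the fibers are genuine quotients.

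It then remains to see that the fiber quotient maps are jointly faithful, i.e. $\bigcap_{x} \overline{C_x D} = \{0\}$, equivalently $\|d\| = \sup_x \|d_x\|$. Here centrality does the essential work, and I would argue directly with pure states rather than invoking $C_0(X)$-algebra machinery. Given $0 \neq d \in D$, choose a pure state $\phi$ with $\phi(d^*d) = \|d\|^2$; in its (irreducible) GNS representation Schur's lemma forces the central subalgebra $C$ to act by scalars, so $\phi|_C$ is a character $x$ (possibly the zero functional, the point at infinity). Consequently $\phi$ annihilates $\overline{C_x D}$ and descends to a state of $D_x$ not vanishing on the image of $d$, whence $d_x \neq 0$. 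Combining this with the previous step, for any $d \neq 0$ I can pick $x$ with $d_x \neq 0$ and then, using that $D_x$ is RFD, a finite-dimensional representation $\rho$ of $D_x$ with $\rho(d_x) \neq 0$; the pullback $\rho \circ (D \twoheadrightarrow D_x)$ is a finite-dimensional representation of $D$ not killing $d$. This produces a separating family of finite-dimensional representations, proving $D$ RFD. The main technical care, and the place the centrality hypothesis is truly used (consistent with the failure of the statement for general amalgams), lies in the faithfulness of the fibering and in bookkeeping the non-unital case via the extra character at infinity; everything else is a clean reduction to Exel and Loring.
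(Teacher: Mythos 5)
Your route is genuinely different from the paper's: you fiber $D=A\ast_CB$ over $\widehat{C}$ and invoke Exel--Loring as a black box on each fiber, whereas the paper fixes a single irreducible representation $\rho$ (where $C$ acts by scalars via Schur's lemma, exactly as in your faithfulness step) and then directly constructs finite-dimensional representations of $A$ and $B$ that agree on $C$ and approximate $\rho_A$ and $\rho_B$ $\ast$-strongly, using Voiculescu's theorem and two lemmas of Hadwin --- in effect re-running the Exel--Loring argument rather than citing it. Your pure-state argument for joint faithfulness of the fibers is correct, and when $C$ contains a common unit of $A$ and $B$ your identification $D_x\cong A_x\ast_{\mathbb C}B_x$ and the ensuing reduction do go through.

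The gap is in the fiber identification when $C$ does not contain a unit for $A$ and $B$ --- in particular in the non-unital case, which the theorem covers. For a character $x$ of $C$, the image of $C$ in $A_x=A/\overline{C_xA}$ is $\mathbb{C}p_A$ for a central projection $p_A$ (the image of any $c\in C$ with $x(c)=1$), and $p_A$ need not be the unit of $A_x$: for $A=C[0,1]\supseteq C=\{f:f(1)=0\}$ and $x=\mathrm{ev}_0$ one gets $\overline{C_xA}=\{f:f(0)=f(1)=0\}$, so $A_x\cong\mathbb{C}^2$ while $C$ maps onto $\mathbb{C}\cdot(1,0)$. Consequently the maps $A\to A_x\hookrightarrow A_x\ast_{\mathbb C}B_x$ and $B\to B_x\hookrightarrow A_x\ast_{\mathbb C}B_x$ do \emph{not} agree on $C$ (they send $c$ to $x(c)p_A$ and $x(c)p_B$ respectively, neither of which is $x(c)1$), so the inverse map $D\to A_x\ast_{\mathbb C}B_x$ in your base-change argument is not defined, and $D_x$ is really an amalgam over a one-dimensional subalgebra $\mathbb{C}p$ rather than a free product with trivial amalgam, so Exel--Loring does not apply as stated. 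This is repairable: a nondegenerate representation of $D_x$ sends $C$ to $x(\cdot)P$ for a projection $P$ commuting with the images of $A$ and $B$, and splitting the Hilbert space as $PH\oplus P^{\perp}H$ shows that $D_x$ embeds into $E_x\oplus E_0$, where $E_x$ is the unital free product over $\mathbb{C}$ of the quotients of the unitizations $\widetilde A$, $\widetilde B$ by the ideals generated by $\{c-x(c)1:c\in C\}$, and $E_0=(A/\overline{CA})\ast(B/\overline{CB})$. These building blocks are (extensions by $\mathbb{C}$ of) quotients of $A$ and $B$, hence RFD by the strong RFD hypothesis, so $E_x$ and $E_0$ are RFD by Exel--Loring. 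With that repair, your argument gives a valid alternative proof.
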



\noindent In particular, since all commutative $\mathrm{C}^*$-algebras are clearly strongly RFD, this gives the  result of \cite{Kor14} with a different and shorter proof. Moreover, when applied to the case of trivial amalgam, our methods recover the result of Exel-Loring  for separable $\mathrm{C}^*$-algebras. In fact for a non-trivial $C$ and strongly RFD $A$ and $B$, or a trivial $C$ and RFD $A$ and $B$, given an irreducible representation $\rho$ of $A\ast_C B$ and a sequence $p_n \uparrow 1$ of projections on a Hilbert space, we can construct finite dimensional representations of $A\ast_C B$ living on subspaces of $p_nH$  and $\ast$-strongly converging to $\rho$.

As corollaries to our main theorem, we give sufficient conditions for amalgamated free products of discrete and locally compact groups to have RFD $\mathrm{C}^*$-algebras.

 \medskip
 
\noindent {\bf Corollary.} {\it Let $G_1$ and $G_2$ be virtually abelian discrete groups and let $\Lambda $ be a central subgroup in both $G_1$ and $G_2$. Then
the full group $\mathrm{C}^*$-algebra of the amalgamated free product $G_1*_\Lambda  G_2$ is RFD.}

 \medskip

\noindent {\bf Corollary.} {\it Let $G_1$ and $G_2$ be separable locally compact groups, 
and let $\Lambda$ be an open central subgroup in both. Assume moreover that $G_1$ and $G_2$ are Lie groups each containing a closed subgroup of finite index that is compact modulo its center or  are projective limits of such Lie groups. Then the full group $\mathrm{C}^*$-algebra of the amalgamated free product $G_1*_\Lambda G_2$ is RFD.}
\\

 To better understand why amalgamated products over central amalgams are particularly favorable candidates for RFD $\mathrm{C}^*$-algebras, we should pay heed to related results from group theory. The property of being RFD can be considered as a $\mathrm{C}^*$-analogue of maximal almost periodicity and residual finiteness for groups. We say a discrete group is {\it maximally almost periodic} (MAP) if its finite dimensional unitary representations separate its elements, and we say it is {\it residually finite} (RF) if the same can be said for homomorphisms of the group into finite groups. Any discrete RF group is MAP, and by Mal'cev's theorem \cite{Mal40} the converse is true when the group is finitely generated.
 If the full group $\mathrm{C}^*$-algebra $\mathrm{C}^*(G)$ of a discrete group $G$ is RFD, then $G$ is clearly MAP. (In fact, this also holds for locally compact groups, as was shown by Spronk and Wood in \cite{SW};  here the definition of MAP is only changed by the addition of the word ``continuous".) Though the converse does not always hold (e.g. for $\text{SL}_3(\mathbb{Z})$ as shown in \cite{Bek99}), Bekka and Louvet show in \cite{BL00} that when $G$ is amenable, $\mathrm{C}^*(G)$ is RFD exactly when $G$ is MAP. In  particular, this means that a finitely generated amenable group is RF if and only if its full group $\mathrm{C}^*$-algebra is RFD.
This gives us a wealth of examples of RFD $\mathrm{C}^*$-algebras coming from discrete groups.
Beyond these, examples of groups with RFD $\mathrm{C}^*$-algebras include full group $\mathrm{C}^*$-algebras of nonabelian free groups \cite{Cho80},  virtually abelian groups \cite{Thoma68}, surface groups and fundamental groups of closed hyperbolic 3-manifolds that fiber over the circle \cite{LS04}, and many 1-relator groups with non-trivial center \cite{HS}.

Permanence (or lack thereof) of RF and MAP under amalgamation has been well studied in group theory, and results and examples coming from this are valuable guides for the analogous study in $\mathrm{C}^*$-algebras. For instance, in \cite{Hig51}, Higman constructed a pair of finitely generated metabelian groups $G_1$ and $G_2$ with common cyclic subgroup $\Lambda$ such that $G_1*_\Lambda G_2$ is not RF, and Baumslag proved in \cite{Bau63} that any two finitely generated, torsion free, nilpotent groups $G_1$ and $G_2$ that are not abelian have some common subgroup $\Lambda $ such that $G_1*_\Lambda G_2$ is not RF. Since, in both cases, the amalgamated product is still a finitely generated group, we conclude that $\mathrm{C}^*(G_1*_\Lambda G_2)\simeq \mathrm{C}^*(G_1)*_{\mathrm{C}^*(\Lambda)}\mathrm{C}^*(G_2)$ cannot be RFD. These examples give an indication of how restrictive we must be in our choice of algebras and amalgam in the $\mathrm{C}^*$-setting. On the other hand, in \cite{Bau63}, Baumslag proved that the amalgamated product of polycyclic groups over a common central subgroup must be RF, and in \cite{KM} Kahn and Morris proved that the amalgamated product of two topological groups $G_1$ and $G_2$ over a common compact central subgroup $\Lambda$ is MAP if and only if both groups are MAP. These point to central amalgams as the next promising frontier for amalgamated products of RFD $\mathrm{C}^*$-algebras, now that that finite dimensional amalgams are completely understood.

As informative as results for RF and MAP groups are for the study $\mathrm{C}^*$-algebras, it is nice when we can return the favor. 
For $G_1, G_2,$ and $\Lambda $ as in either of our corollaries above, $G_1*_\Lambda G_2$ is MAP. To our knowledge, this fact is new in group theory and gives new examples outside of the result of Kahn and Morris.

\bigskip

 \textbf{Acknowledgements.} We would like to thank Mikhail Ershov and Ben Hayes for useful discussions on amalgamated free products of groups, Don Hadwin for teaching us his unitary orbits result, and Anton Korchagin and Wilhelm Winter for discussions that led to a better exposition of the paper.


 \section{Proofs}

\begin{definition} A $\mathrm{C}^*$-algebra is called strongly RFD if all its quotients are RFD.
\end{definition}

Here are two examples of classes of $\mathrm{C}^*$-algebras that are strongly RFD:


\begin{example}[FDI $\mathrm{C}^*$-algebras]
In \cite{CS}, we call $\mathrm{C}^*$-algebras whose irreducible representations are all finite dimensional FDI.

 Since irreducible representations separate the elements of a $\mathrm{C}^*$-algebra, any FDI $\mathrm{C}^*$-algebra is RFD. Moreover any quotient of an FDI $\mathrm{C}^*$-algebra is again FDI since an irreducible representation of a quotient of a $\mathrm{C}^*$-algebra gives rise to an irreducible representation of the $\mathrm{C}^*$-algebra.

Particular cases of FDI $\mathrm{C}^*$-algebras are subhomogeneous $\mathrm{C}^*$-algebras (i.e., those whose irreducible representations are all of dimension no more than some fixed $n<\infty$)
and continuous fields of finite dimensional $\mathrm{C}^*$-algebras (as defined in \cite{Dix77}).
\end{example}

\begin{example}[RFD just-infinite $\mathrm{C}^*$-algebras]

In \cite{GMR}, Grigorchuk, Musat, and R\o rdam defined a $\mathrm{C}^*$-algebra to be just-infinite when it is infinite dimensional and all of its proper quotients are finite dimensional. In the same paper, they demonstrate the existence of just-infinite RFD $\mathrm{C}^*$-algebras.

Moreover, they prove that there are examples of non-exact, just-infinite RFD $\mathrm{C}^*$-algebras, which means, in particular, that strongly RFD $\mathrm{C}^*$-algebras need not be nuclear.
\end{example}

It is worth noting that no $\mathrm{C}^*$-algebra can be both FDI and just-infinite.  Indeed, by \cite[Lemma 5.4]{GMR},
no RFD just-infinite $\mathrm{C}^*$-algebra is of type I;
while, on the other hand, all FDI algebras are type I.


The following result of Hadwin will be crucial for the proof of the main theorem. It is very close to Theorem 4.3 in \cite{Had77}. The particular formulation below was given in private communication, and so we include a brief proof here.  

\begin{lemma}[Hadwin \cite{Had77}]\label{Don} Let $\{e_n\}$ be an orthonormal basis in a separable Hilbert space $H$, $a, b, c, d \in B(H)$ and  $x = \left(\begin{array}{cc} a&b\\c&d\end{array}\right)$. Then for any unitary $w_n: H \to H\oplus H$ such that $w_ne_k = (e_k, 0)$, $1\le k \le n$,
$w_n^*xw_n$ converge to $a$ in the weak operator topology.

Moreover we have convergence in the strong operator topology if and only if $c=0$ and in the $\ast$-strong operator topology if and only if $c=b=0$. 
\end{lemma}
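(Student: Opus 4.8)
The plan is to reduce everything to the behavior of $w_n^*xw_n$ on the basis vectors $e_j$, where the hypothesis $w_ne_k=(e_k,0)$ for $k\le n$ makes the relevant quantities eventually constant in $n$. Throughout write $T_n=w_n^*xw_n\in B(H)$. Since $w_n$ is unitary we have $x=w_nT_nw_n^*$, so $T_n$ is unitarily equivalent to $x$ and hence $\|T_n\|=\|x\|$ for every $n$; this uniform bound is what will let me upgrade convergence statements from the dense subspace of finite linear combinations of the $e_j$ to all of $H$.

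First I would establish weak convergence. For fixed $i,j$ and any $n\ge\max(i,j)$ we have $w_ne_j=(e_j,0)$ and $w_ne_i=(e_i,0)$, so
\[
\langle T_ne_j,e_i\rangle=\langle xw_ne_j,w_ne_i\rangle=\langle x(e_j,0),(e_i,0)\rangle=\langle ae_j,e_i\rangle .
\]
Thus $\langle T_ne_j,e_i\rangle$ is eventually constant and equal to $\langle ae_j,e_i\rangle$. Combining this with the uniform bound $\|T_n\|=\|x\|$ and a routine density argument (approximating arbitrary vectors by finite linear combinations of the $e_j$) gives $T_n\to a$ in the weak operator topology.

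Next comes the strong-convergence criterion, which I would derive from the identity
\[
\|T_n\xi-a\xi\|^2=\|T_n\xi\|^2-2\,\mathrm{Re}\,\langle T_n\xi,a\xi\rangle+\|a\xi\|^2 .
\]
By the weak convergence already proved, the middle term converges to $-2\|a\xi\|^2$, so strong convergence at $\xi$ is equivalent to $\|T_n\xi\|\to\|a\xi\|$. Here the hypothesis enters again: since $w_n^*$ is isometric, $\|T_ne_j\|=\|xw_ne_j\|=\|x(e_j,0)\|$, whence $\|T_ne_j\|^2=\|ae_j\|^2+\|ce_j\|^2$ for $n\ge j$. Feeding this into the identity gives $\|T_ne_j-ae_j\|^2\to\|ce_j\|^2$. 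Hence $T_ne_j\to ae_j$ for every $j$ if and only if $ce_j=0$ for every $j$, i.e. if and only if $c=0$. For the nontrivial (``if'') direction I then pass from the $e_j$ to finite linear combinations by linearity of strong convergence and to all of $H$ by the uniform bound, exactly as in the weak case.

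Finally, $*$-strong convergence of $T_n$ to $a$ means simultaneous strong convergence of $T_n\to a$ and of $T_n^*=w_n^*x^*w_n\to a^*$. The first, by the previous paragraph, holds iff $c=0$. Applying the same strong-convergence criterion to the matrix $x^*=\left(\begin{smallmatrix}a^*&c^*\\ b^*&d^*\end{smallmatrix}\right)$ shows the second holds iff the lower-left corner $b^*$ of $x^*$ vanishes, i.e. iff $b=0$. Together these give the stated criterion $b=c=0$. The argument is elementary; the only real care needed is the passage from the basis vectors to the whole space, and this is handled uniformly by $\|T_n\|=\|x\|$. The heart of the matter is simply that the hypothesis $w_ne_k=(e_k,0)$ renders both the matrix entries and the norms $\|T_ne_j\|$ eventually constant, reducing each topology to an explicit computation with the first column $(ae_j,ce_j)$ of $x$.
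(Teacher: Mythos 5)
Your proof is correct, and it reaches the conclusion by a somewhat different route than the paper, though both arguments ultimately rest on the same computation: for $n\ge j$ one has $xw_ne_j=(ae_j,ce_j)$, so the lower-left entry $c$ contributes an irremovable $\|ce_j\|$ to the distance of $w_n^*xw_n$ from $a$. The paper proves the strong-convergence claim directly: it truncates an arbitrary vector $\xi$ to a finite combination $\xi'$ of basis vectors, computes $w_n^*xw_n\xi'=w_n^*(a\xi',c\xi')$, and sandwiches $\|w_n^*xw_n\xi-a\xi\|$ between $\|c\xi'\|\pm\epsilon/2$ by explicit triangle-inequality estimates, reading off both implications from that sandwich. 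You instead establish weak convergence first (matrix entries eventually constant, plus the uniform bound $\|T_n\|=\|x\|$ and density) and then use the identity $\|T_n\xi-a\xi\|^2=\|T_n\xi\|^2-2\,\mathrm{Re}\langle T_n\xi,a\xi\rangle+\|a\xi\|^2$ to convert strong convergence into norm convergence, which on basis vectors is the exact identity $\|T_ne_j\|^2=\|ae_j\|^2+\|ce_j\|^2$; uniform boundedness then carries the ``if'' direction from the dense span to all of $H$. Your organization buys an essentially $\epsilon$-free treatment that handles all three claims in a unified way (the paper writes out only the second in detail and asserts the first and third), and it isolates cleanly why the criterion is exactly $c=0$; the paper's direct estimate is more self-contained, needing neither the weak statement nor the norm identity as intermediate steps. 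Both treatments dispose of the $\ast$-strong case the same way, by applying the strong criterion to $x^*$, whose lower-left entry is $b^*$.
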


\begin{proof}
We will show here only the second claim, that $w_n^*xw_n$ converges to $a$ strongly if and only if $c=0$. The third claim, which is the only one we use in this paper, follows directly from the second one. The first claim is proved similarly. 

Let $\epsilon>0$ and $\xi=\sum_{j=1}^\infty \xi_j e_j\in H$. 
Choose $N>0$ so that $\|\sum_{j>N} \xi_je_j\|<\frac{\epsilon}{8\|x\|}$, and write $\xi'=\sum_{j=1}^N \xi_je_j$ and $\xi''=\xi-\xi'$. 
Then for each $n\geq N$, we have

\begin{align*}\label{equal}
   \|w_n^*xw_n \xi-a\xi\|&=\|w_n^*x(\xi',0)-a\xi'+(w_n^*xw_n-a)\xi''|\numberthis\\
   &=\|w_n^*(a\xi', c\xi')-a\xi'+(w_n^*xw_n-a)\xi''\|\\
    &=\|w_n^*(0, c\xi')+w_n^*(a\xi',0)-a\xi'+(w_n^*xw_n-a)\xi''\|.
\end{align*}
Choose $M\geq N$ so that $\|p_{M}a\xi'-a\xi'\|<\epsilon/8$ where $p_{M}$ is the projection onto span$\{e_1,...,e_{M}\}$. Then we have $\|w_n^*(a\xi',0)-a\xi'\|<\epsilon/4$, $\|(w_n^*xw_n-a)\xi''\|<\epsilon/4$, and $\|w_n^*(0,c\xi')\|=\|c\xi'\|$ for all $n\geq M$. So (\ref{equal}) gives us 
$$\|c\xi'\|-\epsilon/2< \|w_n^*xw_n \xi-a\xi\|< \|c\xi'\|+\epsilon/2$$
for all $n\geq M$.
If $c=0$, then we have $\|w_n^*xw_n\xi-a\xi\|<\epsilon$. On the other hand, if there exists an $n>M$ such that $\|w_n^*xw_n \xi-a\xi\|<\epsilon/4$, then  
$$\|c\xi\|< \|c\xi'\|+\frac{\epsilon}{4}<\|w_n^*xw_n \xi-a\xi\|+\frac{3\epsilon}{4}<\epsilon.$$ 
Since $\epsilon$ and $\xi$ were arbitrary, we conclude that $w_n^*xw_n$ converges strongly to $a$ exactly when $c\xi=0$ for all $\xi\in H$. 
\end{proof}

We will need one more lemma, which is essentially the statement 5 in \cite[Lemma 1]{Had14}, where it is formulated in slightly different terms. For the reader's convenience we give a proof of it here.

\begin{lemma}[Hadwin \cite{Had14}]\label{UnitaryAppr} Let $u\in B(H)$ be a unitary operator and let $p_n\in B(H)$, $n\in \mathbb N$,  be a sequence of finite rank projections such that $p_n\uparrow 1$ in $\ast$-strong operator topology. Then for each $n\in \mathbb N$ there is a unitary operator $u_n\in B(p_nH)$ such that
$u_n \to u$ in $\ast$-strong operator topology, where we view $p_nH$ as a subspace of $H$ in the obvious way.
\end{lemma}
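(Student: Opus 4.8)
The plan is to build $u_n$ directly out of $u$ by polar decomposition of the compression to $p_n H$, and then to read off $*$-strong convergence by handling the operator and its adjoint through the two companion polar factorizations. First I would set $S_n = p_n u p_n|_{p_n H}$, a contraction on the finite-dimensional space $p_n H$, and take its polar decomposition $S_n = w_n|S_n|$ with $|S_n| = (S_n^* S_n)^{1/2}$. Since $p_n H$ is finite-dimensional, the initial and final defect spaces of the partial isometry $w_n$ have equal dimension, so $w_n$ extends to a genuine unitary $u_n \in B(p_n H)$ satisfying $S_n = u_n |S_n|$, and automatically also $S_n = |S_n^*|\, u_n$ with $|S_n^*| = (S_n S_n^*)^{1/2}$. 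This extendability is exactly where finite rank of $p_n$ is used.

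Next I would transport everything to $B(H)$ by extending by zero. Writing $T_n = p_n u p_n$, $Q_n = (T_n^* T_n)^{1/2}$, $R_n = (T_n T_n^*)^{1/2}$, and viewing $u_n$ as $u_n p_n \in B(H)$, one has the two factorizations $T_n = u_n Q_n = R_n u_n$. The first convergence input is that $T_n \to u$ and $T_n^* \to u^*$ $*$-strongly, which is immediate from $p_n \uparrow 1$ together with uniform boundedness. The second input is that $Q_n \to 1$ and $R_n \to 1$ strongly: this follows because $T_n^* T_n = p_n u^* p_n u p_n \to 1$ and $T_n T_n^* \to 1$ strongly, combined with the standard fact that the square root is strongly continuous on the uniformly bounded family of positive contractions (proved by approximating $\sqrt{t}$ uniformly on $[0,1]$ by polynomials and running an $\epsilon/3$ argument).

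To conclude, for $\xi \in H$ I would decompose $p_n\xi = Q_n\xi + (p_n - Q_n)\xi$ inside $p_n H$ and use $u_n Q_n = T_n$ to write $u_n p_n \xi = T_n\xi + u_n(p_n - Q_n)\xi$. Because $u_n$ is isometric on $p_n H$ and $p_n - Q_n \to 0$ strongly, the error term has norm $\|(p_n - Q_n)\xi\| \to 0$, so $u_n p_n \xi \to u\xi$. Symmetrically, using the left factorization in the form $T_n^* = u_n^* R_n$ together with $R_n \to 1$ gives $p_n u_n^* \xi \to u^*\xi$. Hence $u_n \to u$ in the $*$-strong operator topology.

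The hard part is precisely the adjoint convergence, and it is the reason a single polar decomposition does not suffice. From $T_n = u_n Q_n$ alone, controlling $u_n^*$ would force one to estimate $(1 - Q_n)$ applied to the \emph{moving} vectors $u_n^*\xi$, which mere strong convergence of $Q_n$ does not deliver. The resolution is to exploit both polar factorizations $T_n = u_n Q_n = R_n u_n$ of the \emph{same} unitary $u_n$, so that $u_n$ is governed by $Q_n$ and $u_n^*$ by $R_n$; once this symmetry is in place the remaining estimates are routine.
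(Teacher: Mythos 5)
Your proof is correct, but it takes a genuinely different route from the paper's. The paper writes $u = e^{2\pi i a}$ for a self-adjoint $a$ with $-1\le a\le 1$ and sets $u_n = e^{2\pi i p_nap_n}$; the convergence $u_n\to u$ then follows in one line from $p_nap_n\to a$ $\ast$-strongly together with the $\ast$-strong continuity of the continuous functional calculus on bounded sets. Note that the paper's argument never uses that the $p_n$ have finite rank, whereas your construction leans on finite-dimensionality to extend the partial isometry in the polar decomposition of $p_nup_n|_{p_nH}$ to a unitary --- an extension that can genuinely fail for infinite-rank compressions (compress the bilateral shift to $\ell^2(\mathbb Z_{\ge 0})$ and you get the unilateral shift, whose polar isometry admits no unitary extension on that subspace). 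What your approach buys is a concrete, canonical $u_n$ (the unitary polar factor of the compression) and a self-contained argument whose only analytic input is the strong continuity of $t\mapsto\sqrt{t}$ on uniformly bounded positive operators, which is in effect the same polynomial-approximation lemma hiding inside the paper's appeal to functional-calculus continuity. One small remark: your closing claim that a single polar factorization cannot deliver the adjoint convergence is overstated. From $T_n = u_nQ_n$ alone one gets $u_n^*T_n = Q_n$, hence $p_nu_n^*\xi = u_n^*\bigl(p_n\xi - T_nu^*\xi\bigr) + Q_nu^*\xi$, and since $\|p_n\xi - p_nup_nu^*\xi\|\to 0$ and $Q_nu^*\xi\to u^*\xi$, the adjoint convergence already follows; the second factorization $T_n = R_nu_n$ is a tidy way to organize this but is not logically necessary. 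This does not affect the correctness of your argument.
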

\begin{proof} Write $u$ as $u= e^{2\pi i a}$, where $-1\le a\le 1$, and let $u_n=e^{2\pi i p_nap_n}$. Since  $p_nap_n\to a$ in the $\ast$-strong topology and since the functional calculus is continuous with respect to the $\ast$-strong operator topology, one has $u_n \to u$ $\ast$-strongly.
\end{proof}

Our main theorem is for both unital and non-unital cases, meaning that C*-algebras can be either unital or non-unital, and in the case both of them are unital, the amalgamated free product can be either unital or non-unital. 

\begin{theorem}\label{main} Let $A$ and $B$ be separable strongly RFD $\mathrm{C}^*$-algebras and let $C$ be a central $\mathrm{C}^*$-subalgebra in both $A$ and $B$.
Then the amalgamated free product $A\ast_C B$ is RFD.
\end{theorem}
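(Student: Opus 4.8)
The plan is to produce, for every irreducible representation $\rho$ of $A\ast_C B$, a sequence of finite-dimensional representations converging to $\rho$ in the $\ast$-strong operator topology. Since the irreducible representations separate the points of $A\ast_C B$, all such families taken together form a separating family of finite-dimensional representations, which is exactly what RFD demands. The first and decisive observation is that centrality of $C$ collapses $\rho$ on the amalgam. Let $\rho\colon A\ast_C B\to B(H)$ be irreducible and write $\rho_A=\rho\circ\iota_A$, $\rho_B=\rho\circ\iota_B$. Since $C$ is central in both $A$ and $B$, the image $\rho(C)$ commutes with $\rho(A)$ and with $\rho(B)$, hence with all of $\rho(A\ast_C B)$; by irreducibility $\rho(A\ast_C B)'=\mathbb{C}1$, so $\rho(C)\subseteq\mathbb{C}1$. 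Thus $\rho|_C$ is a character $\chi$ of $C$ (or the zero map in the degenerate non-unital case). This is precisely the feature that makes central amalgams tractable: over each irreducible representation the amalgam contributes only scalars.

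Consequently $\rho_A$ annihilates the ideal of $A$ generated by $\{c-\chi(c)1: c\in C\}$, so it factors through the quotient $A_\chi$ of $A$ in which $C$ acts by the scalars $\chi$; likewise $\rho_B$ factors through the analogous quotient $B_\chi$ of $B$. Because $A$ and $B$ are strongly RFD, the quotients $A_\chi$ and $B_\chi$ are separable and RFD. Moreover every finite-dimensional representation of $A_\chi$ sends each $c\in C$ to $\chi(c)1$, and the same holds for $B_\chi$; so any finite-dimensional representation of $A$ factoring through $A_\chi$ and any one of $B$ factoring through $B_\chi$ automatically agree on $C$, and hence --- provided they act on a common space --- assemble, by the universal property of $A\ast_C B$, into a single finite-dimensional representation of $A\ast_C B$.

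It therefore remains to approximate $\rho_A$ and $\rho_B$ simultaneously by finite-dimensional representations on a common sequence of subspaces. Fixing finite-rank projections $p_n\uparrow 1$ on $H$, I would use residual finite dimensionality of $A_\chi$ and $B_\chi$ together with Hadwin's results to build finite-dimensional representations $\pi^{(n)}_A$ and $\pi^{(n)}_B$ carried by $p_nH$: realizing $\rho_A$ up to approximate unitary equivalence through the separating finite-dimensional representations of $A_\chi$, putting the resulting operators in $2\times 2$ block form relative to $H=p_nH\oplus(1-p_n)H$, compressing to the corner via Lemma \ref{Don} (where $\ast$-strong, rather than merely weak, convergence is guaranteed because the off-diagonal blocks vanish in the limit), and correcting the compressions back onto genuine unitaries of $p_nH$ by Lemma \ref{UnitaryAppr}. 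Carrying this out so that both sequences land on the \emph{same} subspaces $p_nH$, send each $c\in C$ to $\chi(c)1_{p_nH}$ exactly, and converge $\ast$-strongly to $\rho_A$ and $\rho_B$ is the technical heart of the argument, and I expect it to be the main obstacle.

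Granting this, the universal property produces finite-dimensional representations $\pi^{(n)}=\pi^{(n)}_A\ast_C\pi^{(n)}_B$ of $A\ast_C B$ with $\pi^{(n)}\to\rho$ $\ast$-strongly. Since $\ast$-strong convergence of representations along the inclusions $p_nH\subseteq H$ forces $\liminf_n\|\pi^{(n)}(x)\|\ge\|\rho(x)\|$ for every $x$, any $x$ with $\rho(x)\neq 0$ is detected by some $\pi^{(n)}$. Ranging over all irreducible $\rho$ then yields the desired separating family of finite-dimensional representations, so $A\ast_C B$ is RFD.
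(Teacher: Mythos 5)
Your overall strategy coincides with the paper's: reduce to an irreducible $\rho$, use centrality to get $\rho(C)\subseteq\mathbb{C}1$, pass to RFD quotients, and approximate $\rho_A$ and $\rho_B$ by finite-dimensional representations on common subspaces using Voiculescu's theorem and Hadwin's two lemmas. But the step you defer as ``the technical heart'' is precisely the one genuinely new idea the proof requires, and without it the argument does not close. The paper resolves it with a dimension-matching amplification: if $\bar\pi_1,\bar\pi_2,\ldots$ is a separating family of unital finite-dimensional representations of $\rho_A(A)$ with $\dim\pi_i=N_i$, and $\bar\pi'_1,\bar\pi'_2,\ldots$ one of $\rho_B(B)$ with $\dim\pi'_i=N'_i$, one replaces them by $\tilde\pi_i=\pi_i^{(N'_i)}$ and $\tilde\pi'_i=(\pi'_i)^{(N_i)}$, so that $\dim\tilde\pi_i=N_iN'_i=\dim\tilde\pi'_i$ while both still send each $c\in C$ to $\lambda(c)1$. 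Arranging the infinite direct sums $\pi=\tilde\pi_1\oplus(\tilde\pi_1\oplus\tilde\pi_2)\oplus\cdots$ and $\pi'=\tilde\pi'_1\oplus(\tilde\pi'_1\oplus\tilde\pi'_2)\oplus\cdots$ with respect to the \emph{same} decomposition $H=H_1\oplus H_2\oplus\cdots$ makes the projections $p_m$ onto $H_1\oplus\cdots\oplus H_m$ simultaneously reducing for $\pi$ and $\pi'$.

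This also exposes a flaw in your plan of ``fixing finite-rank projections $p_n\uparrow 1$'' in advance: for an arbitrary such sequence the compression $a\mapsto p_n\pi(a)p_n$ is not multiplicative, hence not a representation. The projections must be chosen adapted to the common block structure just described, so that $p_mH$ is invariant for both $\pi$ and $\pi'$; only then are the compressions honest unital representations sending $c$ to $\lambda(c)1_{p_mH}$ exactly, which is what lets the two halves amalgamate. Once this construction is in place, the remainder of your outline (Voiculescu's theorem, Lemma \ref{Don}, Lemma \ref{UnitaryAppr}, and the final norm estimate applied to a polynomial approximant of $x$) goes through essentially as in the paper.
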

\begin{proof}
Let $0\neq x \in A\ast_C B$. We construct a finite dimensional representation $\sigma$ of $A\ast_C B$ such that $\sigma(x)\neq 0$. There exist an irreducible representation $\rho$ of $A\ast_C B$ on a separable Hilbert space $H$ and a unit vector $\xi\in H$ such that $\|\rho(x)\xi\|\ge \frac{\|x\|}{2}.$ Let $i_A$ and $i_B$ denote the standard embeddings of $A$ and $B$ into $A\ast_C B$.
Choose $K\in \mathbb{N}$, $a_i^{(k)}\in A, b_i^{(k)}\in B$, $i=1, \ldots, N^{(k)}$, $k = 1, \ldots, K$ such that for \begin{equation}\label{NotationalNightmare}\tilde x =  \sum_{k=1}^K i_A(a_1^{(k)})i_B(b_1^{(k)})\ldots i_A(a_{N^{(k)}}^{(k)})i_B(b_{N^{(k)}}^{(k)}),\end{equation}  we have
\begin{equation}\label{14}\|x - \tilde x \|\le \frac{\|x\|}{8}.\end{equation} 
(The sum in (\ref{NotationalNightmare}) might also contain monomials starting with an element from $i_B(B)$ or ending with an element from $i_A(A)$, but we will assume that it does not. This does not change anything in the proof and we do it just to avoid notational nightmare.)
Then 
\begin{equation}\label{13}\|\rho\left(\tilde x\right)\xi\|\ge \frac{3\|x\|}{8}.\end{equation}
We denote the representations of $A$ and $B$ induced by $\rho$ with 
$\rho_A$ and $\rho_B$ respectively, i.e., $\rho_A(a) = \rho(i_A(a)), \; \rho_B(b) = \rho(i_B(b)),$ for each $a\in A, b\in B$.
Let $$N = \max_{1\le k\le K}N^{(k)},$$ $$E = \{a_i^{(k)}\;|\; k = 1, \ldots, K, i= 1, \ldots, N^{(k)}\},$$ $$ F = \{b_i^{(k)}\;|\; k = 1, \ldots, K, i= 1, \ldots, N^{(k)}\},$$
and
\begin{align*}  G = \{\xi\}&\bigcup  \{ \rho_B(b_i^{(k)})\rho_A(a_{i+1}^{(k)})\rho_B(b_{i+1}^{(k)})\ldots \rho_A(a_{N^{(k)}}^{(k)})\rho_B(b_{N^{(k)}}^{(k)})\xi\;|\;i = 1, \ldots, N^{(k)},\\ & \hphantom{{}= \{ \rho_B(b_i^{(k)})\rho_A(a_{i+1}^{(k)})\rho_B(b_{i+1}^{(k)})\ldots \rho_A(a_{N^{(k)}}^{(k)})\rho_B(b_{N^{(k)}}^{(k)})\xi\;|\}} k=1, \ldots, K\}\\
& \bigcup  \{ \rho_A(a_i^{(k)})\rho_B(b_{i}^{(k)})\rho_A(a_{i+1}^{(k)})\ldots \rho_A(a_{N^{(k)}}^{(k)})\rho_B(b_{N^{(k)}}^{(k)})\xi\;|\;i = 2, \ldots, N^{(k)},\\ & \hphantom{{}\bigcup  \{ \rho_A(a_i^{(k)})\rho_B(b_{i}^{(k)})\rho_A(a_{i+1}^{(k)})\ldots \rho_A(a_{N^{(k)}}^{(k)})\rho_B(b_{N^{(k)}}^{(k)})\xi\;|\;}  k=1, \ldots, K\}.\end{align*}
 Since $\rho$ is irreducible and $C$ is a central subalgebra in $A\ast_C B$, 
for each $c\in C$ there is $\lambda(c) \in \mathbb C$ such that $$\rho(c) = \lambda(c) 1.$$
Since $A$ and $B$ are strongly RFD, $\rho_A(A)$ and $\rho_B(B)$ are RFD. Note that, unless $\rho(C)$ is zero, $\rho_A(A)$ and $\rho_B(B)$ are both unital, regardless of whether or not $A\ast_C B$ is.
  In this case let $\bar \pi_1, \bar \pi_2, \ldots$ be a countable separating family of unital finite dimensional representations of $\rho_A(A)$ and let $\pi_i = \bar \pi_i\circ \rho_A.$ Let $\bar \pi'_1, \bar \pi'_2, \ldots$ be a countable separating family of unital finite dimensional representations of $\rho_B(B)$ and let $\pi'_i = \bar \pi'_i\circ \rho_B.$  
  
  In the case $\rho(C)$ is zero, we replace unital representations by nondegenerate ones.
  
  Let $N_i = \dim \pi_i$, $N'_i = \dim \pi'_i$.  Let $$\tilde \pi_i = \pi_i^{(N_i')}, \; \tilde\pi'_i = {\pi'}_i^{(N_i)}.$$ Then $\dim \tilde \pi_i = \dim \tilde \pi'_i.$ Notice that for each $c\in C$, \begin{equation}\label{1} \tilde \pi_i(c) = \lambda(c) 1 = \tilde \pi'_i(c).\end{equation}
Let $\pi$ be a direct sum of all $\tilde \pi_i$'s where each one is repeated infinitely many times, say $$\pi = \tilde \pi_1 \oplus (\tilde \pi_1 \oplus \tilde \pi_2) \oplus (\tilde \pi_1 \oplus \tilde \pi_2 \oplus \tilde \pi_3) \oplus \ldots.$$ We consider $\pi$ as a representation of $A$ on $B(H)$ with respect to some decomposition $$H = H_1 \oplus H_2 \oplus H_3 \oplus \ldots$$ where $H_1 \cong \mathbb C^{N_1N_1'}, H_2 \cong \mathbb C^{N_1N_1' + N_2N_2'}, \ldots$.  Let $\pi': B \to B(H)$ be defined by $$\pi' = \tilde \pi'_1 \oplus (\tilde \pi'_1 \oplus \tilde \pi'_2) \oplus (\tilde \pi'_1 \oplus \tilde \pi'_2 \oplus \tilde \pi'_3) \oplus \ldots $$ with respect to the same decomposition of $H$. Let $p_i\in B(H)$ be the orthogonal projection onto $H_1\oplus \ldots \oplus H_i$.
It is easy to see that each $p_iH$ is an invariant subspace for $\pi$ and $\pi'$. 

 Now, for any $a\in A$, we have that $\pi(a) = 0$  iff $\rho_A(a) = 0$, and $rank (\pi(a)) = \infty$ when $\pi(a)\neq 0$. So by Voiculescu's theorem, $\rho_A \oplus \pi$ is approximately unitarily equivalent to $\pi$. Hence there exists a unitary $u: H\oplus H \to H$ such that for all $d \in E$ we have
 $$\left\|\left(\begin{array}{cc} \rho_A(d) & \\ & \pi(d) \end{array}\right) -  u^*\pi(d)u\right\|\le \delta,$$ where $$\delta = \frac{\|x\|}{80\cdot KN}.$$ By Lemma \ref{Don} there exist unitaries $w_m: H \to H\oplus H$ such that for all $a\in A$, $$w_m^* \left(\begin{array}{cc} \rho_A(a) & \\ & \pi(a) \end{array}\right) w_m \to \rho_A(a)$$ in the $*$-strong topology.
In particular there exists a unitary $w: H \to H\oplus H$ such that for all $d\in E$, $\eta \in G$
$$\left\|\rho_A(d)\eta - w^* \left(\begin{array}{cc} \rho_A(d) & \\ & \pi(d) \end{array}\right) w\eta\right\|<\delta.$$
Hence for all $d\in E, \eta \in G$, \begin{equation}\label{10}\|\rho_A(d)\eta-  w^*u^* \pi(d) uw \eta\|\le 2\delta.\end{equation}
Similarly we find unitaries $u', w'$ such that  \begin{equation}\label{10'}\|\rho_B(d)\eta-  w'^*u'^* \pi'(d) u'w' \eta\|\le 2\delta.\end{equation} for all $d\in F$, $\eta\in G$.
Applying Lemma \ref{UnitaryAppr} to $uw$ and $u'w'$, we find $M\in \mathbb N$ and unitaries $v$ and $v'$ on $p_mH$, which is identified with a subspace of $H$, such that
 $$\| \eta -  p_m\eta\|\le \delta,$$
 $$\|(uw-v)p_m\eta\| \le \delta,$$
$$\|(u'w'-v')p_m\eta\| \le \delta,$$ for all $\eta\in G$,
$$\| (w^*u^*-  v^*)\pi(d)uwp_m\eta\|\le \delta,$$
 for all $d\in E$, $\eta\in G$ and $$\| (w'^*u'^* -  v'^*)\pi'(d)u'w'p_m\eta\|\le \delta,$$
 for all $d\in F$, $\eta\in G$.

Now we define finite dimensional representations $\sigma_A: A \to p_mB(H)p_m$ and $\sigma_B: B \to p_mB(H)p_m$ by $$\sigma_A (a) =  v^* (p_m \pi(a) p_m) v, \;\;\; \sigma_B (b) =  v'^* (p_m \pi'(b) p_m) v',$$ for any $a\in A$, $b\in B$. Then  by (\ref{1}) $$\sigma_A(c) = \lambda(c) 1_{p_mH} = \sigma_B(c),$$ for any $c\in C$.  As $\sigma_A$ and $\sigma_B$ agree on $C$, we obtain a finite dimensional representation $\sigma: A\ast_C B \to p_mB(H)p_m$.

 Since $p_mH$ is invariant subspace for $\pi$, we have $p_m\pi(a)p_mv =\pi(a)v$, for each $a\in A$. Using this we obtain
 \begin{align*}\label{11} 
 & \|w^*u^*\pi(d)uw\eta - \sigma_A(d)p_m\eta\| \numberthis = \|w^*u^*\pi(d)uw\eta - v^* (p_m \pi(d) p_m) v p_m\eta\| \\
\le & \|w^*u^*\pi(d)uw(\eta - p_m\eta)\| + \|w^*u^*\pi(d)uwp_m\eta - v^*  \pi(d)  v p_m\eta\| \\ \le & \|w^*u^*\pi(d)uw(\eta - p_m\eta)\| + \|(w^*u^*-v^*)\pi(d)uwp_m\eta\| + \|v^*\pi(d)(uw-v)p_m\eta\| \\ \le & 3\delta,
 \end{align*}
 for any $d\in E$, $\eta\in G$.
By (\ref{10}) and (\ref{11}), we have
 \begin{equation}\label{20} \|\rho_A(d)\eta - \sigma_A(d)p_m\eta\| \le 5\delta, \end{equation} for any $d\in E$, $\eta\in G$. Similarly we obtain \begin{equation}\label{21} \|\rho_B(d)\eta - \sigma_B(d)p_m\eta\| \le 5\delta,
  \end{equation} for any $d\in F$, $\eta\in G$.   By repeated uses of (\ref{20}) and (\ref{21}),
\begin{align*}
& \| \sigma_A(a_1^{(k)})\sigma_B(b_1^{(k)})\ldots \sigma_A(a_{N^{(k)}}^{(k)})\sigma_B(b_{N^{(k)}}^{(k)})p_m\xi \\
&  \hphantom{{} \sigma_A(a_1^{(k)})\sigma_B(b_1^{(k)})} -
\rho_A(a_1^{(k)})\rho_B(b_1^{(k)})\ldots \rho_A(a_{N^{(k)}}^{(k)})\rho_B(b_{N^{(k)}}^{(k)})\xi\| \\
&  \le \|\sigma_A(a_1^{(k)})\sigma_B(b_1^{(k)})\ldots \sigma_A(a_{N^{(k)}}^{(k)})p_m\left(\sigma_B(b_{N^{(k)}}^{(k)})p_m\xi  - \rho_B(b_{N^{(k)}}^{(k)})\right)\xi \| \\
& \hphantom{{}\le}+ \| \sigma_A(a_1^{(k)})\sigma_B(b_1^{(k)})\ldots \sigma_A(a_{N^{(k)}}^{(k)})p_m\rho_B(b_{N^{(k)}}^{(k)})\xi \\
&\hphantom{{}  \hphantom{{}\le}+\sigma_A(a_1^{(k)})\sigma_B(b_1^{(k)})}  -\rho_A(a_1^{(k)})\rho_B(b_1^{(k)})\ldots \rho_A(a_{N^{(k)}}^{(k)})\rho_B(b_{N^{(k)}}^{(k)})\xi\| \\
&\le 5\delta  + \| \sigma_A(a_1^{(k)})\sigma_B(b_1^{(k)})\ldots \sigma_A(a_{N^{(k)}}^{(k)})p_m\rho_B(b_{N^{(k)}}^{(k)})\xi \\
& \hphantom{{}\le 5\delta  +\| \sigma_A(a_1^{(k)})\sigma_B(b_1^{(k)})}- \rho_A(a_1^{(k)})\rho_B(b_1^{(k)})\ldots \rho_A(a_{N^{(k)}}^{(k)})\rho_B(b_{N^{(k)}}^{(k)})\xi\| \\
& \leq 5\delta
\\
&\hphantom{{}\leq }+\|\sigma_A(a^{(k)}_1)\sigma_B(b^{(k)}_1)\ldots \sigma_B(b^{(k)}_{N^{(k)}-1})p_m\left(\sigma_A(a^{(k)}_{N^{(k)}})-\rho_A(a^{(k)}_{N^{(k)}})\right)(\rho_B(b^{(k)}_{N^{(k)}})\xi))\|\\
&\hphantom{{}\le }+\|\sigma_A(a^{(k)}_1)\sigma_B(b^{(k)}_1)\ldots \sigma_B(b^{(k)}_{N^{(k)}-1})p_m\rho_A(a^{(k)}_{N^{(k)}})\rho_B(b^{(k)}_{N^{(k)}})\xi \\
&  \hphantom{{} \hphantom{{}\le 5\delta}+\sigma_A(a_1^{(k)})\sigma_B(b_1^{(k)})} -\rho_A(a_1^{(k)})\rho_B(b_1^{(k)})\ldots \rho_A(a_{N^{(k)}}^{(k)})\rho_B(b_{N^{(k)}}^{(k)})\xi\| \\
&\le \ldots \le 2N^{(k)}5\delta \le 10 N\delta.\end{align*}
Hence
 \begin{equation}\label{12}\| \sigma(\tilde x) p_m\xi - \rho(\tilde x)\xi\|\le 10NK\delta = \frac{\|x\|}{8}.\end{equation}
Combining (\ref{12}), (\ref{13}) and (\ref{14})  we obtain
\begin{multline*}\|\sigma(x)p_m\xi\|\ge \|\sigma\left(\tilde x\right)p_m\xi\| -
\|\sigma\left(x - \tilde x\right)p_m\xi\| \\ \ge  \|\rho\left(\tilde x\right)\xi\|
 - \|\sigma(\tilde x)p_m\xi - \rho\left(\tilde x \right)\xi\| - \|\sigma\left(x- \tilde x\right)p_m\xi\| \ge \frac{\|x\|}{8} . \end{multline*}
Thus $\sigma(x)\neq 0$.
\end{proof}

\begin{remark} 

When the amalgam $C$ is trivial (that is $C =\mathbb C 1$ in the unital case and $C=0$ in the non-unital case), the assumption that the $\mathrm{C}^*$-algebras are strongly RFD can be omitted. One needs them only to be RFD because starting with any pair of separating families of unital (or nondegenerate, in the non-unital case) representations of $A$ and $B$ would lead to representations coinciding on $C$. Thus our proof recovers the Exel-Loring result that free products of separable RFD C*-algebras are RFD.
\end{remark}

\begin{remark}
 It follows from the proof that given an irreducible representation $\rho$ of $A\ast_C B$ on a separable Hilbert space $H$ and an arbitrary sequence $p_n \uparrow 1$ of projections on $H$, we actually can construct finite dimensional representations of $A\ast_C B$ living on subspaces of $p_nH$ and $\ast$-strongly converging to $\rho$.
\end{remark}

\begin{corollary} (Korchagin \cite{Kor14}) The amalgamated free product of commutative $\mathrm{C}^*$-algebras is RFD.
\end{corollary}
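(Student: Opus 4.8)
The plan is to deduce this directly from Theorem~\ref{main} by verifying that both of its hypotheses are automatic in the commutative setting. First I would note that every commutative $\mathrm{C}^*$-algebra is FDI: its irreducible representations are precisely its characters and so are one-dimensional. By the discussion of FDI algebras above, every FDI algebra is strongly RFD, so $A$ and $B$ are strongly RFD. Second, if $C$ is any $\mathrm{C}^*$-subalgebra of a commutative algebra $A$, then $C$ is automatically central in $A$, and likewise in $B$, since these algebras are commutative. Hence, for separable commutative $A$ and $B$ with a common subalgebra $C$, Theorem~\ref{main} applies verbatim and yields that $A\ast_C B$ is RFD.

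It then remains to remove the separability hypothesis, which I would do by a separabilization. Given $0\neq x\in A\ast_C B$, I would first approximate it within $\tfrac{\|x\|}{2}$ by a finite sum $\tilde x$ of alternating monomials $i_A(a_1)i_B(b_1)\cdots i_A(a_m)i_B(b_m)$, which involves only finitely many elements of $A$ and of $B$. Next I would choose separable commutative subalgebras $A_0\subseteq A$ and $B_0\subseteq B$ containing these elements, arranged by a standard exhaustion so that $C_0:=A_0\cap C=B_0\cap C$ is a common separable subalgebra. Universality then provides a $\ast$-homomorphism $\phi\colon A_0\ast_{C_0}B_0\to A\ast_C B$ carrying the evident lift $\tilde x_0$ to $\tilde x$; since $\|\tilde x\|\ge \tfrac{\|x\|}{2}>0$, we get $\tilde x_0\neq 0$. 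By the separable case, $A_0\ast_{C_0}B_0$ is RFD, so there is a finite-dimensional representation, given by a pair $\sigma_A\colon A_0\to\mathbb M_n$ and $\sigma_B\colon B_0\to\mathbb M_n$ agreeing on $C_0$, that does not annihilate $\tilde x_0$.

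The main obstacle will be to promote this pair to a finite-dimensional representation of the full $A\ast_C B$, i.e.\ to extend $\sigma_A$ and $\sigma_B$ to finite-dimensional representations of $A$ and $B$ that still agree on all of $C$. Here I would use the commutative structure decisively: decomposing $\mathbb M_n$ along the joint eigenspaces of $\sigma_A|_{C_0}=\sigma_B|_{C_0}$ exhibits $\sigma_A$ and $\sigma_B$ as direct sums of characters of $A_0$ and of $B_0$, and each such character extends to a character of the ambient algebra because the induced map of spectra is surjective. The hard part is that these extensions must be chosen \emph{compatibly}, so that the extended representations still coincide after restriction to $C$; securing this simultaneous choice over each $C_0$-eigenspace is the crux, and it is exactly where the identification of characters with points of the spectrum is essential. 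Once it is arranged, the resulting pair agrees on $C$ and hence defines a finite-dimensional representation $\sigma$ of $A\ast_C B$ with $\sigma(x)\neq 0$, which proves that $A\ast_C B$ is RFD.
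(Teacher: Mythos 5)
Your first paragraph is exactly the paper's argument, which is left implicit there: every commutative $\mathrm{C}^*$-algebra is FDI (its irreducible representations are characters), hence strongly RFD, and every $\mathrm{C}^*$-subalgebra of a commutative algebra is central, so Theorem~\ref{main} applies directly. In the separable setting --- which is the setting of Theorem~\ref{main} and of Korchagin's original result --- that one paragraph is the entire proof, and it is correct.

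The last two paragraphs, where you try to remove separability, go beyond anything the paper does, and they contain a genuine gap at precisely the step you flag as ``the crux.'' A character $\alpha$ of $A_0$ and a character $\lambda$ of $C$ that agree on $C_0=A_0\cap C$ need not have a common extension to a character of $A$: dually, the restriction map $\widehat{A}\to\widehat{A_0}\times_{\widehat{C_0}}\widehat{C}$ need not be surjective. (Already for $X=\{a,b,c\}$ mapping onto $Z=\{z_1,z_2\}$ by $a,b\mapsto z_1$ and $c\mapsto z_2$, with $A_0\subseteq C(X)$ the functions constant on $\{b,c\}$ and $C$ the pullback of $C(Z)$, one gets $C_0=\mathbb C 1$, and $\mathrm{ev}_a\in\widehat{A_0}$ is compatible over $C_0$ with $\mathrm{ev}_{z_2}\in\widehat{C}$ yet the two have no common extension to a character of $C(X)$.) In your scheme, on each $C_0$-eigenspace you must pick extensions of all the characters of $A_0$ occurring in $\sigma_A$ and all the characters of $B_0$ occurring in $\sigma_B$ whose restrictions to $C$ match up as operators; the set of admissible restrictions for each individual character is only a closed subset of the fibre of $\widehat{C}\to\widehat{C_0}$, and nothing in your closing-up construction guarantees that these sets can be matched. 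So ``once it is arranged'' is carrying all the weight and is not arranged. If, as the paper does, you read the corollary in the separable setting inherited from Theorem~\ref{main}, delete the last two paragraphs and you are done; if you want the non-separable statement, the compatibility has to be built into the choice of $A_0$, $B_0$, $C_0$, and that requires a genuinely new argument.
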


\begin{corollary}\label{discrete} Let $G_1$ and $G_2$ be virtually abelian discrete groups and let $\Lambda$ be a central subgroup in both $G_1$ and $G_2$. Then $\mathrm{C}^*(G_1*_\Lambda G_2)\simeq \mathrm{C}^*(G_1)*_{\mathrm{C}^*(\Lambda )}\mathrm{C}^*(G_2)$ is RFD.
\end{corollary}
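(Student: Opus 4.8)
The plan is to deduce this directly from Theorem~\ref{main} by verifying its hypotheses for $A = \mathrm{C}^*(G_1)$, $B = \mathrm{C}^*(G_2)$, and $C = \mathrm{C}^*(\Lambda)$. The isomorphism $\mathrm{C}^*(G_1*_\Lambda G_2)\simeq \mathrm{C}^*(G_1)*_{\mathrm{C}^*(\Lambda )}\mathrm{C}^*(G_2)$ is already recorded in the introduction via \cite[Lemma 3.1]{ESS}, so it suffices to check that $\mathrm{C}^*(G_1)$ and $\mathrm{C}^*(G_2)$ are separable and strongly RFD and that $\mathrm{C}^*(\Lambda)$ sits inside each of them as a central $\mathrm{C}^*$-subalgebra. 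Since the $G_i$ are countable discrete groups, the $\mathrm{C}^*(G_i)$ are separable, so the only substantive points are strong residual finite dimensionality and the central embedding of the amalgam.

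For strong residual finite dimensionality I would invoke the structure theory of virtually abelian groups. Each $G_i$ contains an abelian subgroup of finite index; replacing it by its normal core (a finite intersection of conjugates, hence normal, of finite index, and still abelian) we may assume there is an abelian normal subgroup $N_i \trianglelefteq G_i$ of index $m_i$. Clifford theory then bounds the dimension of every irreducible unitary representation of $G_i$ by $[G_i:N_i]=m_i$: the restriction to $N_i$ splits into characters forming a single $G_i$-orbit of size $t=[G_i:I]$ with common multiplicity $e$ satisfying $e^2 \le [I:N_i]$, whence $\dim \le e\,t \le [G_i:N_i]$. Thus $\mathrm{C}^*(G_i)$ is $m_i$-subhomogeneous, hence FDI in the sense of the Example on FDI $\mathrm{C}^*$-algebras above. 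Since every quotient of an FDI algebra is again FDI and FDI algebras are RFD, it follows that $\mathrm{C}^*(G_i)$ is strongly RFD.

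For the amalgam, the inclusion $\Lambda \hookrightarrow G_i$ induces an injective $*$-homomorphism $\mathrm{C}^*(\Lambda) \hookrightarrow \mathrm{C}^*(G_i)$; injectivity is the standard fact that the full $\mathrm{C}^*$-algebra of a subgroup embeds, seen by inducing a representation from $\Lambda$ up to $G_i$ and restricting back, so that the $\mathrm{C}^*(\Lambda)$-norm is recovered. Because $\Lambda$ is central in $G_i$, each of its elements commutes with every group element of $G_i$ and hence, by linearity and continuity, with all of $\mathrm{C}^*(G_i)$; therefore the image of $\mathrm{C}^*(\Lambda)$ lies in the center, making it a central $\mathrm{C}^*$-subalgebra of both $\mathrm{C}^*(G_1)$ and $\mathrm{C}^*(G_2)$. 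With all hypotheses in place, Theorem~\ref{main} applies and yields that $\mathrm{C}^*(G_1)*_{\mathrm{C}^*(\Lambda )}\mathrm{C}^*(G_2)$, and hence $\mathrm{C}^*(G_1*_\Lambda G_2)$, is RFD.

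The step I expect to require the most care is the claim that $\mathrm{C}^*(G_i)$ is subhomogeneous, i.e.\ that virtual abelianness forces a uniform finite bound on the dimensions of the irreducible representations; once this is in hand the remaining verifications are formal.
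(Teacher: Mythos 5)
Your proposal follows the paper's proof essentially verbatim in structure: invoke the isomorphism from \cite[Lemma 3.1]{ESS}, observe that $\mathrm{C}^*(G_i)$ is subhomogeneous hence FDI hence strongly RFD, and apply Theorem~\ref{main}. The only difference is that where you sketch a Clifford-theoretic argument for subhomogeneity (which for infinite discrete groups needs a little more care than the finite-group version, since restrictions of irreducible unitary representations need not decompose discretely a priori), the paper simply cites Thoma \cite{Thoma68} for this fact, so your proof is correct and matches the paper's.
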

\begin{proof} It was proved in \cite{Thoma68} that $\mathrm{C}^*$-algebras of discrete virtually abelian groups are subhomogeneous. Hence they are strongly RFD and Theorem \ref{main} applies. The isomorphism is well-known (see e.g. \cite[Lemma 3.1]{ESS}).
\end{proof}

We also have an application for amalgamated free products of locally compact groups. However, to our knowledge, the isomorphism from Corollary \ref{discrete} has not been addressed in the case of locally compact groups. Thus, before we can proceed, we must prove that this isomorphism holds, at least in the case where the common subgroup is open and central. 
The following useful fact was communicated to us by Ben Hayes.

\begin{proposition}\label{Ben} Suppose G is a locally compact Hausdorff group and $\Lambda\leq G$ is
an open subgroup. Then the restriction of any nondegenerate representation $\pi :
L^1(G) \to B(H)$ to the natural copy of $L
^1
(\Lambda)$ inside $L^1
(G)$ is also  nondegenerate.
\end{proposition}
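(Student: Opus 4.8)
The plan is to use the openness of $\Lambda$ to produce a bounded approximate identity for $L^1(G)$ that already lives inside $L^1(\Lambda)$, and then to push it through $\pi$. Recall first that since $\Lambda$ is open, a left Haar measure on $G$ restricts to a left Haar measure on $\Lambda$, so that (for a compatible normalization) extension by zero realizes $L^1(\Lambda)$ isometrically as a $*$-subalgebra of $L^1(G)$; this is the natural copy in the statement. The essential point is that the neutral element $e\in G$ is an interior point of $\Lambda$, so $e$ has a neighborhood basis of precompact open sets $V$ with $V\subseteq\Lambda$.

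First I would fix such a basis and set $e_V=\mu(V)^{-1}\mathbf 1_V\in L^1(G)$, normalized so that $\|e_V\|_1=1$. This is the standard construction of a left bounded approximate identity for $L^1(G)$: by continuity of translation in $L^1$ one has $\|e_V*f-f\|_1\to 0$ for every $f\in L^1(G)$ as $V$ shrinks. Because $V\subseteq\Lambda$, each $e_V$ is supported in $\Lambda$, and hence lies in the copy of $L^1(\Lambda)$ inside $L^1(G)$.

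Next I would invoke the standard fact that a nondegenerate $*$-representation sends a bounded approximate identity to an approximate unit in the strong operator topology. Concretely, for a vector of the form $\eta=\pi(f)\zeta$ with $f\in L^1(G)$ and $\zeta\in H$, one computes $\pi(e_V)\eta=\pi(e_V*f)\zeta\to\pi(f)\zeta=\eta$, using $e_V*f\to f$ in $L^1$ together with the contractivity $\|\pi(g)\|\le\|g\|_1$. Since $\pi$ is nondegenerate, vectors of this form are dense in $H$, and since $\|\pi(e_V)\|\le\|e_V\|_1=1$ is uniformly bounded, a standard $3\varepsilon$-argument yields $\pi(e_V)\xi\to\xi$ for every $\xi\in H$.

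Finally, because each $e_V$ belongs to $L^1(\Lambda)$, the operators $\pi(e_V)$ lie in $\pi(L^1(\Lambda))$, and the convergence $\pi(e_V)\xi\to\xi$ exhibits every $\xi\in H$ as a limit of vectors in $\pi(L^1(\Lambda))H$. Hence $\overline{\pi(L^1(\Lambda))H}=H$, that is, the restriction is nondegenerate. I do not anticipate a serious obstacle here; the only point requiring genuine care is the first step, namely arranging that the $e_V$ form an approximate identity for all of $L^1(G)$ while simultaneously being supported inside $\Lambda$. This is precisely where the openness of $\Lambda$ is used in an essential way, since without it one could neither localize the approximate identity within $\Lambda$ nor realize $L^1(\Lambda)$ inside $L^1(G)$ by extension by zero.
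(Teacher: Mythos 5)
Your proposal is correct and follows essentially the same route as the paper's proof: both arguments realize $L^1(\Lambda)$ inside $L^1(G)$ by extension by zero and then produce a bounded approximate identity for $L^1(G)$ supported in $\Lambda$ (normalized characteristic functions of a neighborhood basis of the identity lying in $\Lambda$), which a nondegenerate representation sends to a net converging strongly to the identity. The extra detail you supply on the $3\varepsilon$-argument is just an expansion of the standard fact the paper invokes.
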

\begin{proof} Since $\Lambda \subseteq G$ is open, we can naturally identify $L^1
(\Lambda)$ as a subalgebra of
$L^1
(G)$ by extending compactly supported functions on $\Lambda$ to be zero off their supports on $G$. Moreover, there exists an approximate unit of $L^
1
(G)$ that is contained
in $L^1
(\Lambda)$. Indeed, take a neighborhood basis of the unit of $G$ consisting of open
sets in $\Lambda$ with compact closure. Since $\Lambda$ is open in $G$, this forms a neighborhood
basis of the unit in $G$, and the normalized characteristic functions on these sets
give an approximate unit.
Now, under a nondegenerate representation  $\pi : L^1
(G) \to B(H)$, any
approximate unit of $L^1
(G)$ will converge strongly to the identity. Since our
particular approximate unit was contained in $L^1
(\Lambda)$, the same will hold when
$\pi$ is restricted to $L^1
(\Lambda)$. It follows that $\pi|_{L^1(\Lambda)}$
is also nondegenerate.
\end{proof}

\begin{proposition}
Let $G_1$ and $G_2$ be locally compact Hausdorff groups and $\Lambda \leq G_1, G_2$ an open central subgroup of both. Then $\Gamma=G_1*_\Lambda G_2$ is a locally compact Hausdorff group and
$$\mathrm{C}^*(\Gamma)\simeq \mathrm{C}^*(G_1)*_{\mathrm{C}^*(\Lambda)} \mathrm{C}^*(G_2).$$
\end{proposition}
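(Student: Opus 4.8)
The statement has two parts, and I would treat them in turn.

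For the group-theoretic part, the key point is that since $\Lambda$ is central in $G_1$ and in $G_2$, it is central—hence normal—in $\Gamma = G_1 *_\Lambda G_2$, and the conjugation action of $\Gamma$ on $\Lambda$ is trivial. I would topologize $\Gamma$ by declaring a neighborhood basis of the identity in $\Lambda$ (with its given locally compact topology) to be a neighborhood basis of the identity in $\Gamma$. The standard criterion for a filter base to define a group topology requires a condition on products and inverses (supplied by $\Lambda$ being a topological group) together with a condition on conjugates, and the latter is automatic here because $\Lambda$ is central. This makes $\Lambda$ an open subgroup of $\Gamma$; since open subgroups are closed and $\Lambda$ is Hausdorff, $\{e\}$ is closed in $\Gamma$, so $\Gamma$ is Hausdorff, and it is locally compact because it has an open locally compact subgroup. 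Using the classical fact that the factors of an amalgamated free product embed, $G_1$ and $G_2$ become subgroups; each is a union of $\Lambda$-cosets and hence open in $\Gamma$, and the inclusions $G_i \hookrightarrow \Gamma$ are continuous and open. Thus $\Lambda, G_1, G_2$ are all open subgroups of the locally compact Hausdorff group $\Gamma$.

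For the $\mathrm{C}^*$-algebra isomorphism, the plan is to exhibit mutually inverse $*$-homomorphisms between $\mathrm{C}^*(\Gamma)$ and $D := \mathrm{C}^*(G_1) *_{\mathrm{C}^*(\Lambda)}\mathrm{C}^*(G_2)$, using the universal properties of both constructions. Because $G_i$ is open in $\Gamma$, extension by zero gives isometric inclusions $\mathrm{C}^*(G_i) \hookrightarrow \mathrm{C}^*(\Gamma)$ and $\mathrm{C}^*(\Lambda) \hookrightarrow \mathrm{C}^*(G_i)$ (a standard fact for open subgroups, seen via induced representations); in particular $D$ makes sense. The crucial structural input, where Proposition \ref{Ben} enters, is that because $\Lambda$ is open, an approximate unit of $L^1(\Lambda)$ built from normalized indicators of small neighborhoods of the identity is simultaneously an approximate unit for $L^1(G_1)$, $L^1(G_2)$, and $L^1(\Gamma)$. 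Hence a single approximate unit $(e_j)$ lying in $\mathrm{C}^*(\Lambda)$ serves all four algebras. In $D$, its common image is then an approximate unit for $D$, and this forces the canonical maps $\iota_i : \mathrm{C}^*(G_i)\to D$ to be nondegenerate and ensures that a nondegenerate representation of $D$ restricts to nondegenerate representations of each $\mathrm{C}^*(G_i)$ whose restrictions to $\mathrm{C}^*(\Lambda)$ agree—matching, via the integrated-form correspondence, exactly the continuous unitary representations of $G_1$ and $G_2$ that agree on $\Lambda$.

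With this in hand I would build the two maps. The inclusions $j_i : \mathrm{C}^*(G_i)\hookrightarrow \mathrm{C}^*(\Gamma)$ agree on $\mathrm{C}^*(\Lambda)$, so the universal property of the amalgamated free product yields $\Phi : D \to \mathrm{C}^*(\Gamma)$; it is surjective because $\Gamma$ is generated by $G_1$ and $G_2$, so the images of $C_c(G_1)$ and $C_c(G_2)$ generate a dense subalgebra of $\mathrm{C}^*(\Gamma)$. Conversely, composing the universal unitary representations $g\mapsto u_g^{(i)}\in M(\mathrm{C}^*(G_i))$ with the (nondegenerate, hence strictly continuous) extensions $\overline{\iota_i}:M(\mathrm{C}^*(G_i))\to M(D)$ gives unitary representations of $G_1$ and $G_2$ in $M(D)$ that agree on $\Lambda$ and are continuous, continuity needing to be checked only on the open subgroup $\Lambda$. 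The universal property of the group amalgamated free product combines them into a continuous unitary representation of $\Gamma$ in $M(D)$, whose integrated form is a nondegenerate $*$-homomorphism $\Psi : \mathrm{C}^*(\Gamma)\to M(D)$ with range the $\mathrm{C}^*$-algebra generated by $\iota_1(\mathrm{C}^*(G_1))$ and $\iota_2(\mathrm{C}^*(G_2))$, namely $D$. Finally I would check $\Phi\circ\Psi = \mathrm{id}$ and $\Psi\circ\Phi=\mathrm{id}$ by verifying them on the generating copies of $\mathrm{C}^*(G_1)$ and $\mathrm{C}^*(G_2)$, where both composites restrict to the identity.

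The main obstacle, and the part requiring the most care, is the bookkeeping around nondegeneracy and multiplier algebras in the non-unital (non-discrete) setting: one must ensure that the maps genuinely land in $D$ and $\mathrm{C}^*(\Gamma)$ rather than merely in their multiplier algebras, and that ``agreeing on $\mathrm{C}^*(\Lambda)$'' for representations of $\mathrm{C}^*(G_i)$ really corresponds to ``agreeing on $\Lambda$'' for the underlying unitary representations. Both of these hinge precisely on the openness of $\Lambda$ and on Proposition \ref{Ben}, which together guarantee the common approximate unit in $\mathrm{C}^*(\Lambda)$ and the nondegeneracy of all the relevant restrictions; in the discrete case these difficulties evaporate and the argument reduces to the elementary one behind Corollary \ref{discrete}.
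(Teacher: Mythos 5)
Your argument is correct in outline, but it takes a genuinely different route from the paper in both halves. For the topological statement, you construct the group topology on $\Gamma$ directly from a neighborhood basis in $\Lambda$ (centrality killing the conjugation condition), whereas the paper simply quotes Theorem 5 of \cite{KM} and the remark after its Corollary 3 for local compactness, Hausdorffness, and openness of $G_1,G_2$ in $\Gamma$; your sketch is a reasonable self-contained substitute. For the $\mathrm{C}^*$-isomorphism, the paper never leaves the Hilbert-space picture: it verifies the two defining properties of the amalgamated free product directly, using Proposition \ref{Ben} together with the bijection between nondegenerate representations of $L^1(G)$ and continuous unitary representations of $G$ (\cite[13.3.5]{Dix77}) --- first showing that in every nondegenerate representation $\pi$ of $\mathrm{C}^*(\Gamma)$ the images of $\mathrm{C}^*(G_1)$ and $\mathrm{C}^*(G_2)$ generate the same von Neumann algebra as $\pi(\mathrm{C}^*(\Gamma))$, and then integrating the combined unitary representation of $\Gamma$ to get the factorization. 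You instead build mutually inverse morphisms $\Phi$ and $\Psi$, which forces you through multiplier algebras, strict continuity of the extensions $\overline{\iota_i}$, and nondegeneracy of the $\iota_i$ via the common approximate unit in $\mathrm{C}^*(\Lambda)$; this buys a cleaner ``two universal objects'' statement at the cost of exactly the bookkeeping you flag. The one place you are lighter than you should be is the surjectivity of $\Phi$ (equivalently, that the range of $\Psi$ is all of $D$ and that the copies of $\mathrm{C}^*(G_i)$ generate $\mathrm{C}^*(\Gamma)$): ``$\Gamma$ is generated by $G_1$ and $G_2$'' does not by itself give density at the $\mathrm{C}^*$-level, and this is where the paper spends most of its effort. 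In your setup it is best handled concretely: a function in $C_c(\Gamma)$ is supported on finitely many cosets $g\Lambda$ with $g$ a word in $G_1\cup G_2$, and left translation by such a $g$ is an $L^1$-limit of convolutions of translated approximate units lying in $C_c(G_1)\cup C_c(G_2)$, which again uses precisely the openness of $\Lambda$.
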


\begin{proof}

By \cite[Theorem 5]{KM}, since $\Lambda$ is open and central, we know that $\Gamma$ is locally compact and Hausdorff.
Moreover, by the remark following Corollary 3 in \cite{KM}, since $\Lambda$ is open in $G_1$ and $G_2$, it follows that $G_1$ and $G_2$ are open in $\Gamma$. 
Choose the Haar measures on $G_1$ and $G_2$, inherited from the embeddings $\tilde{\iota}_i:G_i\to \Gamma$, $i=1,2$. As in Proposition \ref{Ben}, we have natural embeddings $L^1(\Lambda)\hookrightarrow L^1(G_i)$ for $i=1,2$, and similarly, we can embed $L^1(G_1), L^1(G_2)\hookrightarrow L^1(\Gamma)$ so that the embeddings agree on the respective copies of $L^1(\Lambda)$. The same arguments as in the discrete setting show that these embeddings extend to the full group $\mathrm{C}^*$-algebras, and moreover, the induced embeddings $\iota_i: \mathrm{C}^*(G_i)\hookrightarrow \mathrm{C}^*(\Gamma)$ agree on $\mathrm{C}^*(\Lambda)$ (see e.g. \cite[Proposition 8.8]{Pis03}). 

To verify that $\mathrm{C}^*(\Gamma)\simeq \mathrm{C}^*(G_1)*_{\mathrm{C}^*(\Lambda)} \mathrm{C}^*(G_2)$, it suffices to check that 
$\mathrm{C}^*(\Gamma)= \mathrm{C}^*(\iota_1(\mathrm{C}^*(G_1))\cup \iota_2(\mathrm{C}^*(G_2)))$ and that $\mathrm{C}^*(\Gamma)$ satisfies the desired universal property.

Let $B=\mathrm{C}^*(\iota_1(\mathrm{C}^*(G_1))\cup \iota_2(\mathrm{C}^*(G_2)))$. To show $B=\mathrm{C}^*(\Gamma)$, it suffices to show that for every nondegenerate representation $\pi:\mathrm{C}^*(\Gamma)\to B(H)$, 
$W^*(\pi(\mathrm{C}^*(\Gamma)))=W^*(\pi(B))$.
To that end, let $\pi:\mathrm{C}^*(\Gamma)\to B(H)$ be a nondegenerate representation, and let $\pi_i= \pi\circ\iota_i: \mathrm{C}^*(G_i) \to B(H)$, for $i=1,2$. Since $\iota_i(G_i)$ are both open in $\Gamma$, $\pi_1$ and $\pi_2$ are still nondegenerate by Proposition \ref{Ben}. Since there is
a 1-1 correspondence between nondegenerate representations of a full group $\mathrm{C}^*$-algebra of a locally compact group and strongly
continuous unitary representations of the group (see e.g. \cite[13.3.5]{Dix77} or \cite[p. 183-184]{Dav96}), $\pi, \pi_i$, $i=1,2$, correspond to unique strongly continuous unitary representations $\tilde{\pi}:\Gamma\to U(H)$ and $\tilde{\pi}_i:G_i\to U(H)$, $i=1,2$. Moreover, $W^*(\pi(\mathrm{C}^*(\Gamma)))=W^*(\tilde{\pi}(\Gamma))$ and $W^*(\pi_i(\mathrm{C}^*(G_i)))=W^*(\tilde{\pi_i}(G_i))$, $i=1,2$ (see again \cite[13.3.5]{Dix77} or \cite[p. 183-184]{Dav96}). Since $\tilde{\iota}_1(G_1)\cup \tilde{\iota}_2(G_2)$ generates $\Gamma$, 
we compute
\begin{align*}W^*(\pi(B)) &= W^*\left(\pi_1(\mathrm{C}^*(G_1))\cup \pi_2(\mathrm{C}^*(G_i))\right)\\
&= W^*\left(W^*(\pi_1(\mathrm{C}^*(G_1))), W^*(\pi_2(\mathrm{C}^*(G_2)))\right)\\
&= W^*\left(W^*(\tilde\pi_1(G_1)), W^*(\tilde\pi_2(G_2))\right)\\
&=W^*(\tilde\pi(\Gamma)) = W^*(\pi(\mathrm{C}^*(\Gamma))).
\end{align*}

Now, suppose $\phi_i:\mathrm{C}^*(G_i)\to B(H)$, $i=1,2$ are nondegenerate representations that agree on $\mathrm{C}^*(\Lambda)$. Again, these correspond to unique strongly continuous unitary representations $\tilde{\phi}_i:G_i\to U(H)$, $i=1,2$, which agree on $\Lambda$. The universal property of $\Gamma$ gives a unique strongly continuous unitary representation $\tilde{\psi}:\Gamma\to U(H)$ such that $\tilde{\psi}\tilde{\iota}_i=\tilde{\phi}_i$ for $i=1,2$. This induces a nondegenerate representation $\psi: \mathrm{C}^*(\Gamma)\to B(H)$. Moreover, for $i=1,2$ and $f\in L^1(G_i)$
\begin{align*}
    \psi\iota_i (f)&=\int_\Gamma \iota_i(f)(t)\tilde{\psi}(t) dt=\int_{\iota_i(G_i)} \iota_i(f)(t)\tilde{\psi}(t) dt\\
    &=\int_{G_1} \iota_i(f)(\iota_i(s))\tilde{\psi}(\tilde{\iota}_i(s)) d\iota_i(s)\\
    &=\int_{G_1} f(s)\tilde{\phi}_i(s) ds=\phi_i(f).
\end{align*}
Thus $\mathrm{C}^*(\Gamma)$ has the universal property of  $\mathrm{C}^*(G_1)*_{\mathrm{C}^*(\Lambda)}\mathrm{C}^*(G_2)$.
\end{proof}

\begin{corollary}\label{loccpt}
 Let $G_1$ and $G_2$ be separable locally compact groups, 
 and let $\Lambda$ be an open central subgroup in both. Assume moreover that $G_1$ and $G_2$ are Lie groups each containing a closed subgroup of finite index that is compact modulo its center or  are projective limits of such Lie groups. Then the full group $\mathrm{C}^*$-algebra of the amalgamated free product $G_1*_\Lambda G_2$ is RFD.
\end{corollary}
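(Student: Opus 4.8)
The plan is to combine the preceding Proposition with Theorem~\ref{main}, using the group-theoretic hypotheses only to certify that the two factor algebras are strongly RFD. First I would apply the preceding Proposition to $\Gamma = G_1 *_\Lambda G_2$ to obtain the isomorphism $\mathrm{C}^*(\Gamma) \simeq \mathrm{C}^*(G_1) *_{\mathrm{C}^*(\Lambda)} \mathrm{C}^*(G_2)$, together with the embeddings $\iota_i \colon \mathrm{C}^*(G_i) \hookrightarrow \mathrm{C}^*(\Gamma)$ and $\mathrm{C}^*(\Lambda) \hookrightarrow \mathrm{C}^*(G_i)$ constructed there. I then need to check that this is an amalgamated free product over a \emph{central} subalgebra, i.e.\ that $\mathrm{C}^*(\Lambda)$ lies in the center of each $\mathrm{C}^*(G_i)$. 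For this, note that for any nondegenerate representation $\pi$ of $\mathrm{C}^*(G_i)$ with associated strongly continuous unitary representation $\tilde\pi$ of $G_i$, the unitaries $\tilde\pi(\Lambda)$ commute with all of $\tilde\pi(G_i)$ since $\Lambda$ is central in $G_i$; hence $\pi(\mathrm{C}^*(\Lambda))$ commutes with $\pi(\mathrm{C}^*(G_i))$, and taking $\pi$ faithful shows $\mathrm{C}^*(\Lambda) \subseteq Z(\mathrm{C}^*(G_i))$. Separability of the $G_i$ gives separability of $L^1(G_i)$ and hence of the $\mathrm{C}^*(G_i)$.

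The main content is to show that each $\mathrm{C}^*(G_i)$ is strongly RFD. Here the hypotheses on $G_i$ are exactly Moore's characterization of the locally compact groups all of whose irreducible unitary representations are finite dimensional: a Lie group has this property precisely when it contains a closed finite-index subgroup that is compact modulo its center, and the general such groups are the projective limits of these. Since the irreducible representations of the full group $\mathrm{C}^*$-algebra $\mathrm{C}^*(G_i)$ are in bijection with the irreducible strongly continuous unitary representations of $G_i$, every irreducible representation of $\mathrm{C}^*(G_i)$ is then finite dimensional. Thus $\mathrm{C}^*(G_i)$ is FDI in the sense of the example above, and as observed there every FDI $\mathrm{C}^*$-algebra is strongly RFD.

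With both factors separable and strongly RFD and the amalgam $\mathrm{C}^*(\Lambda)$ central, Theorem~\ref{main} applies and yields that $\mathrm{C}^*(G_1) *_{\mathrm{C}^*(\Lambda)} \mathrm{C}^*(G_2) \simeq \mathrm{C}^*(\Gamma)$ is RFD, completing the proof. The step I expect to carry the real weight is the appeal to Moore's theorem, in particular handling the projective-limit case and confirming that ``compact modulo center'' forces the irreducibles to be genuinely finite dimensional rather than merely type I; the reduction to Theorem~\ref{main} and the centrality check are then routine.
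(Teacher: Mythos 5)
Your proposal is correct and follows essentially the same route as the paper: invoke the preceding Proposition for the isomorphism $\mathrm{C}^*(G_1*_\Lambda G_2)\simeq \mathrm{C}^*(G_1)*_{\mathrm{C}^*(\Lambda)}\mathrm{C}^*(G_2)$, use Moore's characterization to conclude that each $\mathrm{C}^*(G_i)$ is FDI and hence strongly RFD, and then apply Theorem~\ref{main}. The explicit verification that $\mathrm{C}^*(\Lambda)$ is central in $\mathrm{C}^*(G_i)$ is a detail the paper leaves implicit, and you supply it correctly.
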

\begin{proof}
From \cite{KM}, we know $G_1*_\Lambda G_2$ exists and is a locally compact Hausdorff topological group (actually even a Lie group).
As was proved in \cite{Moo72}, the groups described are exactly the locally compact groups whose irreducible representations are all finite dimensional. Hence, their full group $\mathrm{C}^*$-algebras are FDI and, by assumption, separable. So, Theorem \ref{main} again applies.
\end{proof}
Moreover, the amalgamated groups from each of these corollaries are MAP (using results from \cite{SW} in the locally compact case).


But groups with FDI $\mathrm{C}^*$-algebras are not the only ones covered by Theorem \ref{main}. For example, in \cite{BGS}, the authors prove that there exist infinite discrete groups with just-infinite RFD full group $\mathrm{C}^*$-algebras. On the other hand, it follows from \cite{Thoma68} that a discrete group has FDI full group $\mathrm{C}^*$-algebra if and only if it is virtually abelian. Just as their $\mathrm{C}^*$-algebras form disjoint classes, so must these groups. This means, Corollaries \ref{discrete} and \ref{loccpt} can be phrased in more generality, but the generalized statements sound oblique without knowing which groups yield strongly RFD $\mathrm{C}^*$-algebras, which leads us to the following question(s).

\begin{question} Is there a nice characterization of all (discrete) groups that have strongly RFD $\mathrm{C}^*$-algebras?
\end{question}


The questions are particularly curious because the class of all such groups does not seem to fit neatly into any well-known classes of MAP or RF groups. Consider the discrete case. Clearly every quotient of such a group is MAP. Moreover, it follows from Rosenberg's theorem that all discrete groups with strongly RFD full group $\mathrm{C}^*$-algebras must be amenable since their reduced group $\mathrm{C}^*$-algebras must be RFD and hence quasidiagonal. With Bekka and Louvet's aforementioned result from \cite{BL00} in mind, one may make the naive guess that it is sufficient to be amenable with all quotients MAP, but this is wrong. 
Every quotient of a finitely generated nilpotent group is also finitely generated, and hence RF and amenable. However, if the full group $\mathrm{C}^*$-algebra of a finitely generated nilpotent group is strongly RFD, then the group must be virtually abelian. This is because the $\mathrm{C}^*$-algebra generated by any irreducible representation of a nilpotent group is simple (\cite{MR76}) and, as Thoma has shown (\cite{Thoma68}), having only finite dimensional irreducible representations is equivalent to being virtually abelian. 

\providecommand{\bysame}{\leavevmode\hbox to3em{\hrulefill}\thinspace}
\providecommand{\MR}{\relax\ifhmode\unskip\space\fi MR }
\providecommand{\MRhref}[2]{%
  \href{http://www.ams.org/mathscinet-getitem?mr=#1}{#2}
}
\providecommand{\href}[2]{#2}

\end{document}